\newtheorem{theorem}{Theorem}[section]
\newtheorem{lemma}[theorem]{Lemma}
\newtheorem{proposition}[theorem]{Proposition}
\newtheorem{corollary}[theorem]{Corollary}
\theoremstyle{definition}
\newtheorem{definition}[theorem]{Definition}
\newtheorem{example}[theorem]{Example}
\theoremstyle{remark}
\numberwithin{equation}{section}
\begin{document}

\setcounter{page}{1}

\title[FPT on $G$-metric spaces]{Some extensions of Banach'\ CP in $G$-metric spaces}

\author[Ya\'e Olatoundji Gaba]{Ya\'e Olatoundji Gaba$^{1,2,*}$}


\address{$^{1}$\'Ecole Normale Sup\'erieure de Natitingou, Universit\'e de Parakou, B\'enin.}

\address{$^{2}$Institut de Math\'ematiques et de Sciences Physiques (IMSP)/UAC,
Porto-Novo, B\'enin.}

\address{$^{*}$\textit{Corresponding author.}}

\email{\textcolor[rgb]{0.00,0.00,0.84}{gabayae2@gmail.com
}}

\subjclass[2010]{Primary 47H05; Secondary 47H09, 47H10.}

\keywords{$G$-metrics, fixed point, contraction.}

\begin{abstract}
We present different extensions of the Banach contraction principle in the $G$-metric space setting. More precisely, we consider mappings for which the contractive condition is satisfied by a power of the mapping and for which the power depends on the specified point in the space. We first state the result in the continuous case and later, show that the continuity is indeed not necessary. Imitating some techniques obtained in the metric case, we prove that under certain conditions, it is enough for the contractive condition to be verified on a proper subset of the space under consideration. These results generalize well known comparable results.
\end{abstract} 

\maketitle

\section{Introduction and Preliminaries}
Many generalizations of metric spaces have appeared in the last decades and even most of them are metric-like spaces (see cone metric spaces, $G$-metric spaces, etc.), they present they own paricularity and address some specific mathematical problems, both in theory and in application. Once an expos\'e\footnote{At least on a topological point of view.} of the space has been done, it is mathematically ideal to investigate the behaviour of maps between these spaces, specially self maps that leave certain points of the space fixed. The Banach Contraction Principle (BCP) is surely the most celebrated result in fixed point theory. Therefore it represented the default starting point for fixed point theory in different generalized metric spaces. The BCP
has been generalized in many different directions, in many different generalized metric spaces.
The generalized metric space which is our focus here is the $G$-metric space.  Different extensions of the BCP were also presented in $G$-metric spaces, so inspired by the works of Mustafa \cite{mustafa1}, Bryant \cite{t} and Gaji\'c \cite{Gaj}, we prove the following:

\begin{theorem}\label{BCPext}

Let $(X,G)$ be a complete $G$-metric space and let $T:X\to X$
 be a continuous mapping satisfying the condition: there exists $\lambda\in [0,1)$ such that for each $x\in X$, there exists a positive integer $n(x)$ such that
 \begin{equation}\label{BCPexteq}
 G(T^{n(x)}x,T^{n(x)}y,T^{n(x)}z) \leq \lambda \ G(x,y,z),
 \end{equation}
 whenever $y,z\in X.$ 
 Then $T$ leaves exactly one point of $X$ fixed.
\end{theorem}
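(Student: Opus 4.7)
The plan is to adapt the Bryant--Gaji\'c strategy to the $G$-metric setting. Fix any $x_0 \in X$ and introduce cumulative exponents $N_0 := 0$, $N_{k+1} := N_k + n(T^{N_k} x_0)$, together with the ``big-step'' sequence $x_k := T^{N_k} x_0$, so that $x_{k+1} = T^{n(x_k)} x_k$.

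Applying the contractive hypothesis \eqref{BCPexteq} at $x_{k-1}$ with the choice $y = z = T^j x_{k-1}$ yields, for every integer $j \geq 0$,
\begin{equation*}
G(x_k, T^j x_k, T^j x_k) \;\leq\; \lambda \, G(x_{k-1}, T^j x_{k-1}, T^j x_{k-1}),
\end{equation*}
since $T^{n(x_{k-1})}$ commutes with $T^j$ and sends $x_{k-1}$ to $x_k$. Iterating on $k$ gives the key contraction estimate
\begin{equation*}
G(x_m, T^j x_m, T^j x_m) \;\leq\; \lambda^m d_j, \qquad d_j := G(x_0, T^j x_0, T^j x_0).
\end{equation*}

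The technical heart of the argument is to show that the orbit quantities $d_j$ are uniformly bounded in $j$. Writing $n_0 := n(x_0)$, the $G$-metric rectangle inequality $G(x, y, y) \leq G(x, a, a) + G(a, y, y)$ applied with $a = T^{n_0} x_0$, combined with \eqref{BCPexteq} at $x_0$, produces the recursion
\begin{equation*}
d_{n_0 + j} \;\leq\; d_{n_0} + \lambda \, d_j \qquad \text{for all } j \geq 0.
\end{equation*}
Iterating this along each residue class $j \equiv r \pmod{n_0}$ with $0 \leq r < n_0$ yields $d_j \leq d_{n_0}/(1 - \lambda) + \max_{0 \leq r < n_0} d_r =: K$. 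This is where I expect the main obstacle to lie: the exponents $n(x_k)$ vary with $k$, so no common period controls the orbit, and the bound has to be extracted from the single value $n_0$ by the above arithmetic-progression argument.

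Combining the two previous steps and using $x_{m+p} = T^{N_{m+p} - N_m} x_m$ gives $G(x_m, x_{m+p}, x_{m+p}) \leq \lambda^m K$, so $\{x_k\}$ is Cauchy; completeness yields $x_k \to x^* \in X$. Taking $j = 1$ in the key estimate gives $G(x_m, T x_m, T x_m) \leq \lambda^m d_1 \to 0$, hence $T x_m \to x^*$ as well, and the continuity of $T$ then forces $T x^* = x^*$. Uniqueness is immediate: if $u, v$ are both fixed by $T$, they are fixed by every iterate of $T$, so \eqref{BCPexteq} with $x = u$, $y = z = v$ gives $G(u, v, v) \leq \lambda \, G(u, v, v)$, whence $u = v$.
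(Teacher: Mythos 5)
Your proposal is correct and follows essentially the same route as the paper: the big-step sequence $x_{k+1}=T^{n(x_k)}x_k$, the boundedness of $\sup_j G(x_0,T^jx_0,T^jx_0)$ via the recursion $d_{n_0+j}\leq d_{n_0}+\lambda d_j$ (this is exactly the content of the paper's Lemma~\ref{lem1}), the $\lambda^m$ contraction estimate, and the observation $G(x_m,Tx_m,Tx_m)\to 0$ combined with continuity to identify the fixed point. The only cosmetic differences are that you obtain the Cauchy property directly from the key estimate with $j=N_{m+p}-N_m$ rather than by telescoping, and you conclude $Tx^*=x^*$ by a direct limit argument instead of the paper's contradiction with disjoint neighborhoods.
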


The work we present via the different theorems we proved, also extend some theorems of well-known authors such as of \'{C}iri\'{c} \cite{ciric}, Jachymaski \cite{ja1}, Rhoades \cite{ro}, from metric spaces to $G$-metric spaces. Similar work
can also be read in \cite{Gaj} and the references therein.
Also a few recent results about fixed point in $G$-metric spaces can be read in \cite{Gaba1,Gaba5}.
The basic ideas about $G$-metrics can be read in \cite{Mustafa} but for the convenience of the reader, we here recall the most important ones.

\begin{definition} (Compare \cite[Definition 3]{Mustafa})
Let $X$ be a nonempty set, and let the function $G:X\times X\times X \to [0,\infty)$ satisfy the following properties:
\begin{itemize}
\item[(G1)] $G(x,y,z)=0$ if $x=y=z$ whenever $x,y,z\in X$;
\item[(G2)] $G(x,x,y)>0$ whenever $x,y\in X$ with $x\neq y$;
\item[(G3)] $G(x,x,y)\leq G(x,y,z) $ whenever $x,y,z\in X$ with $z\neq y$;
\item[(G4)] $G(x,y,z)= G(x,z,y)=G(y,z,x)=\ldots$, (symmetry in all three variables);

\item[(G5)]
$$G(x,y,z) \leq [G(x,a,a)+G(a,y,z)]$$ for any points $x,y,z,a\in X$.
\end{itemize}
Then $(X,G)$ is called a \textbf{$G$-metric space}.

\end{definition}

\begin{proposition}\label{prop1} (Compare \cite[Proposition 6]{Mustafa})
Let $(X,G)$ be a $G$-metric space. 
Then for a sequence $\{x_n\} \subseteq X$, the following are equivalent
\begin{itemize}
\item[(i)] $\{x_n\}$ is $G$-convergent to $x\in X.$

\item[(ii)] $\lim_{n,m \to \infty}G(x,x_n,x_m)=0.$


\item[(iii)]$\lim_{n \to \infty}G(x,x_n,x_n)=0.$ 

\item[(iv)]$\lim_{n \to \infty}G(x_n,x,x)=0.$ 
\end{itemize}

\end{proposition}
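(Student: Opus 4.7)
The plan is to set up the chain $(i) \Leftrightarrow (ii) \Rightarrow (iii) \Leftrightarrow (iv)$ and close the loop by proving $(iii) \Rightarrow (ii)$. The equivalence $(i) \Leftrightarrow (ii)$ is essentially the definition of $G$-convergence ($x_n \to x$ meaning that for every $\varepsilon > 0$ there is $N$ with $G(x, x_n, x_m) < \varepsilon$ whenever $n, m \geq N$), and $(ii) \Rightarrow (iii)$ follows by specialising $m = n$ in the double limit.

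The technical crux is a short symmetry lemma that I would prove first:
\begin{equation*}
G(u, v, v) \le 2\, G(v, u, u) \qquad \text{for all } u, v \in X.
\end{equation*}
To establish it, apply (G5) with splitting point $u$ to get $G(v, v, u) \le G(v, u, u) + G(u, v, u)$; since $(u, v, u)$ and $(v, u, u)$ are arrangements of the same multiset, (G4) gives $G(u, v, u) = G(v, u, u)$, yielding $G(v, v, u) \le 2\, G(v, u, u)$; a last use of (G4) identifies $G(u, v, v)$ with $G(v, v, u)$. Applying this with $(u, v) = (x_n, x)$ and then with $(u, v) = (x, x_n)$ gives the two-sided comparison that realises $(iii) \Leftrightarrow (iv)$ at once.

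To close with $(iii) \Rightarrow (ii)$, I would apply (G5) with intermediate point $x_n$ to obtain
\begin{equation*}
G(x, x_n, x_m) \le G(x, x_n, x_n) + G(x_n, x_n, x_m),
\end{equation*}
then rewrite $G(x_n, x_n, x_m) = G(x_m, x_n, x_n)$ by (G4), apply (G5) again with splitting point $x$, and invoke the symmetry lemma on $G(x_m, x, x)$ to conclude
\begin{equation*}
G(x, x_n, x_m) \le 2\, G(x, x_n, x_n) + 2\, G(x, x_m, x_m).
\end{equation*}
Both terms on the right vanish as $n, m \to \infty$ under $(iii)$, so $(ii)$ holds. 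The only real conceptual obstacle is the asymmetric role of the repeated versus singleton entries in $G(\cdot, \cdot, \cdot)$, and the symmetry lemma is exactly the tool required to bridge them; after that, everything reduces to direct applications of (G4) and (G5).
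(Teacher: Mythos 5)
The paper states this proposition without proof, importing it verbatim from Mustafa--Sims, so there is nothing internal to compare against; your argument is correct and is essentially the standard one: the auxiliary inequality $G(u,v,v)\le 2\,G(v,u,u)$ is derived legitimately from (G4) and (G5), it gives (iii)$\Leftrightarrow$(iv) two-sidedly, and the estimate $G(x,x_n,x_m)\le 2G(x,x_n,x_n)+2G(x,x_m,x_m)$ correctly closes the cycle via (iii)$\Rightarrow$(ii). The only point worth flagging is that (i)$\Leftrightarrow$(ii) is ``by definition'' only under the joint-limit formulation of $G$-convergence; if one instead defines convergence via the topology generated by the balls $\{y\in X: G(x,y,y)<r\}$, the definitional link is (i)$\Leftrightarrow$(iii), but your chain of implications covers either convention, so the proof stands.
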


\begin{proposition}(Compare \cite[Proposition 9]{Mustafa})

In a $G$-metric space $(X,G)$, the following are equivalent
\begin{itemize}
\item[(i)] The sequence $\{x_n\} \subseteq X$ is $G$-Cauchy.

\item[(ii)] For each $\varepsilon >0$ there exists $N \in \mathbb{N}$ such that $G(x_n,x_m,x_m)< \varepsilon$ for all $m,n\geq N$.

\end{itemize}

\end{proposition}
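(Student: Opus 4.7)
The plan is to prove the two implications separately. The direction (i) $\Rightarrow$ (ii) is essentially tautological: the definition of $G$-Cauchy asks that $G(x_n, x_m, x_\ell)$ be arbitrarily small for all three indices large enough, so specializing $\ell = m$ immediately produces the condition in (ii). No axiom beyond the definition is needed here.

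For the substantive direction (ii) $\Rightarrow$ (i), the key tools are (G5), which plays the role of the triangle inequality, together with the full symmetry from (G4). Fix $\varepsilon > 0$ and, using (ii), choose $N$ so that $G(x_n, x_m, x_m) < \varepsilon/2$ whenever $n,m \geq N$. Then for arbitrary $n,m,\ell \geq N$ I would apply (G5) with the auxiliary point $a = x_m$ to the triple $(x_n, x_m, x_\ell)$, obtaining
\[
G(x_n, x_m, x_\ell) \leq G(x_n, x_m, x_m) + G(x_m, x_m, x_\ell).
\]
The first summand is already controlled by the hypothesis. For the second summand, I would invoke the symmetry axiom (G4), rewriting $G(x_m, x_m, x_\ell) = G(x_\ell, x_m, x_m)$, which is again covered by (ii) with indices $\ell, m \geq N$. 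Adding the two estimates yields $G(x_n, x_m, x_\ell) < \varepsilon$, establishing that $\{x_n\}$ is $G$-Cauchy.

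I do not expect any serious obstacle: the proof is a textbook-style application of the generalized triangle inequality. The one point to verify carefully is the choice of the splitting point $a$ in (G5); if one picks $a$ carelessly (for instance $a = x_n$ or $a = x_\ell$), one gets a term of the form $G(a, x_m, x_\ell)$ with three genuinely distinct indices, which is precisely what (ii) fails to control directly. Choosing $a = x_m$, so that the surviving three-index term collapses (via symmetry) to one with a repeated argument, is exactly what makes the argument close in one step rather than requiring an iterated application of (G5).
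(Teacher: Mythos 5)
Your argument is correct. Note that the paper itself states this proposition without proof (it is a recalled preliminary, cited from Mustafa--Sims), so there is no in-paper argument to compare against; your proof --- specializing $\ell=m$ for (i)$\Rightarrow$(ii), and for the converse applying (G5) with $a=x_m$ so that both resulting terms, $G(x_n,x_m,x_m)$ and $G(x_m,x_m,x_\ell)=G(x_\ell,x_m,x_m)$ (by (G4)), are controlled by hypothesis (ii) --- is exactly the standard one, and your remark about why the splitting point must be $x_m$ rather than $x_n$ or $x_\ell$ is the right observation.
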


\begin{definition} (Compare \cite[Definition 9]{Mustafa})
 A $G$-metric space $(X,G)$ is complete (or more precisely $G$-complete) if every $G$-Cauchy sequence of elements of $(X,G)$ is $G$-convergent in  $(X,G)$. 

\end{definition}

\begin{proposition}(Compare \cite[Proposition 7]{Mustafa})

If $(X,G)$ and $(X',G')$ are two $G$-metric space, then a function $T:(X,G)\to (X',G')$ is continuous at a point $x^*\in X$ if and only if whenever a sequence $\{x_n\}\subseteq X$ is $G$-convergent to $x^*\in X$, then the sequence $\{Tx_n\}\subseteq X'$ is $G'$-convergent to $Tx^*\in X'$.

\end{proposition}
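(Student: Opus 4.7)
The plan is to prove each implication separately, using Proposition \ref{prop1} as a dictionary between $G$-convergence and the vanishing of various $G$-quantities evaluated along the sequence.

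\textbf{Forward implication.} Assume $T$ is continuous at $x^*$ in the natural $\varepsilon$--$\delta$ sense: for every $\varepsilon > 0$ there exists $\delta > 0$ such that $G(x^*, y, z) < \delta$ implies $G'(Tx^*, Ty, Tz) < \varepsilon$. Given $\{x_n\}$ that is $G$-convergent to $x^*$, Proposition \ref{prop1}(iii) furnishes an index $N$ beyond which $G(x^*, x_n, x_n) < \delta$; continuity then forces $G'(Tx^*, Tx_n, Tx_n) < \varepsilon$ for such $n$, and a second use of Proposition \ref{prop1}(iii) gives $Tx_n \to Tx^*$ in $(X', G')$.

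\textbf{Reverse implication.} I would argue by contrapositive. If $T$ is not continuous at $x^*$, negating the $\varepsilon$--$\delta$ condition produces $\varepsilon_0 > 0$ and, for each $n \in \mathbb{N}$, points $y_n, z_n \in X$ with $G(x^*, y_n, z_n) < 1/n$ yet $G'(Tx^*, Ty_n, Tz_n) \geq \varepsilon_0$. By (G3) together with the symmetry (G4), one extracts $G(x^*, y_n, y_n) \leq G(x^*, y_n, z_n) < 1/n$ (and the analogous bound for $z_n$, possibly with an extra constant factor in the degenerate case $z_n = x^*$), so Proposition \ref{prop1}(iii) certifies that both $\{y_n\}$ and $\{z_n\}$ are $G$-convergent to $x^*$. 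Assuming the sequential condition in the statement now holds forces $Ty_n \to Tx^*$ and $Tz_n \to Tx^*$ in $(X', G')$.

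To close the argument, I must show these one-point convergences imply $G'(Tx^*, Ty_n, Tz_n) \to 0$, contradicting the lower bound $\varepsilon_0$. Applying the rectangle inequality (G5) with intermediate point $Ty_n$, then applying (G5) again to $G'(Ty_n, Ty_n, Tz_n) = G'(Tz_n, Ty_n, Ty_n)$ with intermediate point $Tx^*$, and invoking (G4), yields
\begin{equation*}
G'(Tx^*, Ty_n, Tz_n) \leq 2\,G'(Tx^*, Ty_n, Ty_n) + G'(Tx^*, Tx^*, Tz_n).
\end{equation*}
Both terms on the right vanish as $n \to \infty$ by Proposition \ref{prop1}(iii)--(iv), delivering the contradiction. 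The only real technical obstacle is this last estimate: the rectangle inequality and the full symmetry of $G$ must be combined carefully so as to dominate the genuinely three-variable quantity $G'(Tx^*, Ty_n, Tz_n)$---which the sequential hypothesis does not control directly---by the degenerate two-point expressions that it does.
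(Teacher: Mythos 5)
The paper does not prove this proposition at all: it is quoted verbatim from Mustafa--Sims as a preliminary (``Compare [Proposition 7]''), so there is no in-paper argument to compare yours against. Your proof is correct and self-contained. The forward direction is routine, and the key content is exactly where you locate it: dominating the three-variable quantity $G'(Tx^*,Ty_n,Tz_n)$ by degenerate two-point expressions. Your chain of estimates is right --- (G5) with $a=Ty_n$ gives $G'(Tx^*,Ty_n,Tz_n)\leq G'(Tx^*,Ty_n,Ty_n)+G'(Ty_n,Ty_n,Tz_n)$, and a second application of (G5) with $a=Tx^*$ after invoking (G4) gives $G'(Ty_n,Ty_n,Tz_n)\leq G'(Tx^*,Ty_n,Ty_n)+G'(Tz_n,Tx^*,Tx^*)$, which is your displayed bound; both terms vanish by Proposition \ref{prop1}(iii)--(iv). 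Two small points deserve explicit attention. First, you take continuity at $x^*$ to mean the three-variable $\varepsilon$--$\delta$ condition; the topology on a $G$-metric space is generated by the balls $B_G(x,r)=\{y: G(x,y,y)<r\}$, so strictly you should note that your condition is equivalent to topological continuity at $x^*$ --- this follows from the very same (G5) manipulation you use at the end, so it costs nothing, but it should be said. Second, in the degenerate case $z_n=x^*$ you do not actually need an ad hoc constant: $G(x^*,y_n,x^*)=G(y_n,x^*,x^*)<1/n$ and the equivalence of (iii) and (iv) in Proposition \ref{prop1} already yields $y_n\to x^*$. With those two remarks made explicit, the argument is complete.
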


We also recall the results by Mustafa:

\begin{theorem}\label{BCP}(\cite{mustafa1})

Let $(X,G)$ be a $G$-complete $G$-metric space and let $T:X\to X$
 be a mapping such that there exists $\lambda\in [0,1)$ satisfying
 
 \begin{equation}\label{BCPeq}
 G(Tx,Ty,Tz) \leq \lambda \ G(x,y,z),
 \end{equation}

whenever $x,y,z\in X.$ 
 Then $T$ has a unique fixed point. In fact, T is a Picard operator.
\end{theorem}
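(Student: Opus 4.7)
The plan is to adapt the classical Sehgal-type argument to the $G$-metric setting; continuity of $T$ is used only at the final step, to promote the limit of an orbit into a fixed point. \emph{Uniqueness} is immediate: if $p=Tp$ and $q=Tq$, then $T^{n(p)}p=p$ and $T^{n(p)}q=q$, so the hypothesis applied at $x=p$ yields $G(p,q,q)\le\lambda G(p,q,q)$, forcing $p=q$ by (G1)--(G2).

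For \emph{existence}, fix $x_0\in X$, set $x_k=T^k x_0$, and build an index subsequence by $m_0=0$ and $m_{j+1}=m_j+n(x_{m_j})$, so that $x_{m_{j+1}}=T^{n(x_{m_j})}x_{m_j}$. Applying the contractive inequality at $x_{m_j}$ with $y=x_{m_j+k}$ and $z=x_{m_j+k'}$ gives
$$G\bigl(x_{m_{j+1}},x_{m_{j+1}+k},x_{m_{j+1}+k'}\bigr)\le\lambda\,G\bigl(x_{m_j},x_{m_j+k},x_{m_j+k'}\bigr),$$
and iterating down to $j=0$ produces the geometric decay
$$G(x_{m_j},x_{m_j+k},x_{m_j+k'})\le\lambda^{j}\,G(x_0,x_k,x_{k'})\qquad \text{for all } j,k,k'\ge 0.$$

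The \emph{main obstacle} is upgrading this pointwise decay to Cauchyness: one needs $M:=\sup_{k,k'\ge 0}G(x_0,x_k,x_{k'})<\infty$. I will first bound the diagonal $b_k:=G(x_0,x_k,x_k)$. Writing $n_0:=n(x_0)$, the contraction at $x_0$ with $y=z=x_k$ gives $G(x_{n_0},x_{n_0+k},x_{n_0+k})\le\lambda b_k$, and (G5) applied through the point $x_{n_0}$ yields the recursion $b_{n_0+k}\le G(x_0,x_{n_0},x_{n_0})+\lambda\, b_k$. Since only finitely many values $b_0,\ldots,b_{n_0-1}$ serve as initial data, a standard geometric-series induction bounds $\{b_k\}$ uniformly. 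A further application of (G5), the symmetry (G4), and the elementary inequality $G(x,x,y)\le 2G(x,y,y)$ then extends the bound from the diagonal to the full quantity $G(x_0,x_k,x_{k'})$, so $M<\infty$.

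Plugging $M$ into the iterated estimate gives $G(x_{m_j},x_{m_j+k},x_{m_j+k'})\le\lambda^{j}M\to 0$, whence $\{x_{m_j}\}$ is $G$-Cauchy and, by completeness, converges to some $p\in X$; a short (G5)-estimate via $x_{m_j}$ extends the convergence to the full sequence $x_k\to p$. Since $T$ is continuous, $Tp=\lim_k Tx_k=\lim_k x_{k+1}=p$, completing the proof.
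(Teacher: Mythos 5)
Your argument is mathematically sound, but it is not, as written, a proof of the statement you were given. Theorem \ref{BCP} is the plain $G$-metric contraction principle: the contractive inequality \eqref{BCPeq} is imposed on $T$ itself, and no integers $n(x)$ appear anywhere in its hypotheses. The quantities $n(p)$, $n_0=n(x_0)$ and $n(x_{m_j})$ that drive your construction are therefore undefined in this context; what you have actually produced is a (correct, and carefully executed) proof of the Sehgal-type generalization, i.e.\ essentially Theorem \ref{BCPext} of the paper. Note also that the paper does not prove Theorem \ref{BCP} at all --- it is quoted from Mustafa's thesis as background --- so the only in-paper argument yours can be compared with is the proof of Theorem \ref{BCPext}, which it closely parallels: your boundedness step for $b_k=G(x_0,x_k,x_k)$ is exactly Lemma \ref{lem1}, and your index subsequence $m_{j+1}=m_j+n(x_{m_j})$ is the paper's iterate sequence $x_{i+1}=T^{n(x_i)}x_i$. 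Your version is in some respects tighter than the paper's: you extend convergence from the subsequence to the whole orbit and obtain the fixed point by continuity, rather than via the paper's disjoint-neighbourhood contradiction, and you get the Picard property for every initial point directly.

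If one specializes your proof to the theorem actually stated by taking $n(x)\equiv 1$, every step survives, but almost all of the machinery collapses: $m_j=j$, the ``subsequence'' is the whole orbit, the recursion $b_{k+1}\le b_1+\lambda b_k$ gives $b_k\le b_1/(1-\lambda)$ at once, and continuity of $T$ need not be invoked as a hypothesis because it follows from \eqref{BCPeq} (if $x_n\to x$ then $G(Tx,Tx_n,Tx_n)\le\lambda G(x,x_n,x_n)\to 0$). The standard and much shorter route for Theorem \ref{BCP} is: $G(x_n,x_{n+1},x_{n+1})\le\lambda^n G(x_0,x_1,x_1)$, telescope with (G5) to get Cauchyness, let $x_n\to\xi$ by completeness, and conclude $G(\xi,T\xi,T\xi)\le G(\xi,x_{n+1},x_{n+1})+\lambda G(x_n,\xi,\xi)\to 0$, with no appeal to continuity. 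I recommend you either relabel your argument as a proof of the generalized statement it actually establishes, or rewrite it in this specialized form so that every symbol it uses is licensed by the hypotheses of Theorem \ref{BCP}.
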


And the result by Bryant:

\begin{theorem}\label{BCPcor}(Compare\cite{t})

Let $(X,d)$ be a complete metric space and let $T:X\to X$
 be a mapping such that there exists $\lambda\in [0,1)$ satisfying
 
 \begin{equation}\label{BCPeqcor}
 d(T^nx,T^ny) \leq \lambda \ d(x,y),
 \end{equation}
for some $n>1$, whenever $x,y\in X.$ 
  Then $T$ has a unique fixed point.
\end{theorem}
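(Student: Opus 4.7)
The plan is to reduce this to the classical Banach contraction principle applied not to $T$ itself but to its $n$-th iterate $S := T^n$. The hypothesis \eqref{BCPeqcor} says precisely that $S$ is a Banach contraction on $(X,d)$ with ratio $\lambda\in[0,1)$. Since $(X,d)$ is complete, the standard BCP yields a unique fixed point $x^*\in X$ of $S$, i.e.\ $T^n x^* = x^*$.

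Next I would promote $x^*$ from a fixed point of $S$ to a fixed point of $T$. The key observation is that $T$ commutes with its own iterates: applying $T$ to $T^n x^* = x^*$ gives
\begin{equation*}
T(x^*) \;=\; T(T^n x^*) \;=\; T^{n+1}(x^*) \;=\; T^n(Tx^*),
\end{equation*}
so $Tx^*$ is also a fixed point of $S = T^n$. By the uniqueness part of BCP applied to $S$, this forces $Tx^* = x^*$, showing existence of a fixed point of $T$.

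For uniqueness, suppose $y\in X$ satisfies $Ty = y$. Then trivially $T^n y = y$, so $y$ is a fixed point of $S$, and uniqueness of the fixed point of $S$ forces $y = x^*$.

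There is no real obstacle here; the proof is essentially a one-line reduction. The only point to be careful about is not to try to apply contractivity to $T$ directly (which may fail — this is exactly what makes Bryant's result a genuine strengthening of BCP), and to rely on the uniqueness clause of BCP applied to $S$ both to identify $Tx^*$ with $x^*$ and to rule out extraneous fixed points of $T$.
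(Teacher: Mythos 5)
Your proof is correct and is the standard argument for Bryant's theorem: apply the Banach contraction principle to $S=T^n$, then use the commutation $T(T^nx^*)=T^n(Tx^*)$ together with the uniqueness of the fixed point of $S$ to conclude $Tx^*=x^*$, and finally observe that any fixed point of $T$ is a fixed point of $S$. The paper itself does not supply a proof of this theorem --- it is merely recalled from Bryant as background --- but your reduction is exactly the same device the paper uses later (for instance in Lemma \ref{lemExt1}, where $T(u)=T(T^{n(u)}u)=T^{n(u)}(Tu)$ forces $Tu=u$), so your argument is fully in line with the paper's methods and contains no gaps.
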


\section{First results}

We start with the following lemma, needed for the next Theorem, and for which a similar version has been given by Gaji\'c et al. \cite{Gaj}.
\begin{lemma}\label{lem1}

Let $T$ be a map satifying the conditions of Theorem \ref{BCPext}, then the extended real valued mapping $r:X\to [0,\infty]$ defined by

\[
r(x) = \sup_n G(x, T^nx,T^nx),
\]
is actually real valued, i.e. $r(x)<\infty$ whenever $x\in X.$ 
\end{lemma}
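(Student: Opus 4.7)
The plan is to fix $x \in X$ once and for all, set $k := n(x)$ (the integer associated to $x$ by the hypothesis of Theorem \ref{BCPext}), and show that $G(x, T^n x, T^n x)$ stays bounded as $n$ ranges over $\mathbb{N}$ by reducing the index $n$ modulo $k$. The idea is that whenever $n$ exceeds $k$, the iterate $T^n x$ can be ``peeled off'' via the contraction condition \eqref{BCPexteq}, while the finitely many residual iterates $T^0 x, T x, \ldots, T^{k} x$ contribute only a finite quantity that can be absorbed into an explicit constant.

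Concretely, I first set
$$M := \max_{0 \leq j \leq k} G(x, T^j x, T^j x),$$
a maximum over finitely many nonnegative reals, hence finite (note $G(x,x,x)=0$ by (G1)). For $n > k$, I apply axiom (G5) with the auxiliary point $a = T^k x$ and $y = z = T^n x$ to get
$$G(x, T^n x, T^n x) \leq G(x, T^k x, T^k x) + G(T^k x, T^n x, T^n x).$$
Writing $T^n x = T^k(T^{n-k} x)$ and applying the hypothesis \eqref{BCPexteq} with $y = z = T^{n-k} x$, the second summand satisfies
$$G(T^k x, T^n x, T^n x) \leq \lambda \, G(x, T^{n-k} x, T^{n-k} x).$$
Combined with $G(x, T^k x, T^k x) \leq M$, this yields the recurrence
$$G(x, T^n x, T^n x) \leq M + \lambda \, G(x, T^{n-k} x, T^{n-k} x) \qquad (n > k).$$

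The conclusion then follows by a strong induction on $n$: I claim $G(x, T^n x, T^n x) \leq M/(1-\lambda)$ for every $n \geq 0$. For $0 \leq n \leq k$ this is immediate from the definition of $M$. For $n > k$, the inductive hypothesis at index $n-k$ together with the recurrence gives
$$G(x, T^n x, T^n x) \leq M + \lambda \cdot \frac{M}{1-\lambda} = \frac{M}{1-\lambda},$$
closing the induction. Taking the supremum over $n$ then gives $r(x) \leq M/(1-\lambda) < \infty$, as required.

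I expect the argument to be essentially bookkeeping rather than genuinely hard. The only real subtlety is that both $k$ and $M$ depend on the chosen point $x$, but the lemma only asserts pointwise finiteness of $r$, not a uniform bound, so this is harmless. A small pitfall to avoid is applying (G5) with the wrong intermediate point: splitting via $T^k x$ (rather than via, e.g., $T^{n-k} x$) is exactly what puts the ``heavy'' term in the form $G(T^k x, T^n x, T^n x)$, to which \eqref{BCPexteq} applies after the identification $T^n = T^k \circ T^{n-k}$.
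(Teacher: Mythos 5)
Your proof is correct and follows essentially the same route as the paper's: both split $G(x,T^nx,T^nx)$ via (G5) at the intermediate point $T^{n(x)}x$, apply \eqref{BCPexteq} to the tail term, and sum the resulting geometric series to get the bound $M/(1-\lambda)$. The only cosmetic difference is that you package the iteration as a strong induction on $n$, whereas the paper unrolls the recurrence explicitly $s$ times with $s\cdot n(x) < n \leq (s+1)\cdot n(x)$.
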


\begin{proof}
Let $x\in X$ and define

\[
l(x) = \max \{ G(x, T^ix,T^ix): i=1,2,\cdots, n(x) \}.
\]

For a positive interger $n$, by the Archimedean property, there exists an integer $s\geq 0$ such that 
\[
s.n(x) < n \leq (s+1).n(x).
\]
Therefore
\begin{align*}
G(x, T^nx,T^nx) & \leq G(x, T^{n(x)}x,T^{n(x)}x) +G(T^{n(x)}x,T^{n(x)}T^{n-n(x)}x,T^{n(x)}T^{n-n(x)}x)  \\ 
               & \leq  l(x) + \lambda G(x,T^{n-n(x)}x,T^{n-n(x)}x) \\
               & \leq l(x) + \lambda l(x) + \lambda^2 l(x)+\cdots +\lambda^s l(x)\\
               & \leq \frac{l(x)}{1-\lambda} \ \text{ for all } n\geq 0.
\end{align*}
Hence $r(x)<\infty$ whenever $x\in X$.

\end{proof}

\subsection{Proof of the Theorem \ref{BCPext}}

\begin{proof}
Let $x_0\in X$ be arbitrary. We construct the sequence $\{x_n\}$ inductively by setting
\[
x_1 = T^{n(x_0)}(x_0), \text{ and } x_{i+1} = T^{n(x_i)}(x_i).
\]
If wet set $m_i= n(x_i)$, by usual procedure, we have that
\begin{align*}
G(x_n, x_{n+1},x_{n+1}) & = G(T^{m_{n-1}}x_{n-1},T^{m_{n-1}}T^{m_n}x_{n-1} ,T^{m_{n-1}}T^{m_n}x_{n-1})\\
             & \leq  \lambda G(x_{n-1},T^{m_n}x_{n-1}, T^{m_n}x_{n-1}) =  \lambda G(T^{m_{n-2}}x_{n-2},T^{m_n}T^{m_{n-2}}x_{n-2},T^{m_n}T^{m_{n-2}}x_{n-2}) \\
           & \leq \lambda^2 G(x_{n-2},T^{m_n}x_{n-2},T^{m_n}x_{n-2})\leq  \cdots \\
             & \leq \lambda^n G(x_0,T^{m_n}x_0,T^{m_n}x_0 ).
\end{align*}

From Lemma \ref{lem1}, it follows that 
\[
G(x_n, x_{n+1},x_{n+1}) \leq \lambda^n r(x_0).
\]
Hence for $m>n,$ we have
\[
G(x_n, x_m,x_m) = \sum_{i=n}^{m-1} G(x_i, x_{i+1},x_{i+1})\leq \frac{\lambda^n}{1-\lambda}r(x_0) \to 0 \text{ as } n \to \infty.
\]

Thus $\{x_n\}$ is a $G$-Cauchy sequence.
Moreover, since $X$ is $G$-complete there exists $\xi \in X$ such that $\{x_n\}$ $G$-converges to $\xi$.


\vspace*{0.3cm}

\underline{Claim:} $T\xi=\xi$

By way of contraction, assume that $T\xi\neq \xi$. Then there exists two disjoint neighborhoods $U$ and $V$ of $\xi$ and $T\xi$ respectively such that 
\begin{align*}\label{neig}
\rho = \inf \{G(x,y,y):x\in U, y\in V \}>0.
\end{align*}

Since $T$ is continuous, $x_n\in U$ and $Tx_n \in V$ for all $n$ large enough.

However, 
\begin{align*}
G(x_n, Tx_n,Tx_n) & = G(T^{m_{n-1}}x_{n-1},T^{m_{n-1}}Tx_{n-1},T^{m_{n-1}}Tx_{n-1})\\
                 & \leq  \lambda G(x_{n-1},Tx_{n-1},Tx_{n-1} ) \leq \cdots \\
                  & < \lambda^n G(x_0, Tx_0,Tx_0) \to 0 \text{ as } n \to \infty,
\end{align*}
--a contradiction since $\rho >0$. Thus $\boxed{T\xi=\xi}$.

The uniqueness of the fixed point is given for free by the inequality \eqref{BCPexteq}.

\end{proof}

The following corollary is a direct consequence of the Theorem \ref{BCPext} and is quite surprising, as result, even though very interesting.


\begin{corollary}\label{corBCP}

Let $T$ be a map satisfying the conditions of Theorem \ref{BCPext}, then for any initial point $x_0\in X$, the sequence of iterates $T^nx_0, n=1,2,\cdots$, $G$-converges to the unique fixed point of $T$.
\end{corollary}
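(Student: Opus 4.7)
The plan is to anchor the argument at the already-produced fixed point $\xi$ and exploit the contractive inequality \eqref{BCPexteq} at $\xi$ itself. The corollary does not follow immediately from Theorem \ref{BCPext}, because the sequence $\{x_n\}$ constructed there is $x_{i+1} = T^{n(x_i)}x_i$, not the orbit $\{T^n x_0\}$ we now care about. So I would not try to quote convergence of that auxiliary sequence; instead, I would use the fact that at the fixed point the exponent $n(\xi)$ gives a genuine single-step contraction toward $\xi$.

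Concretely: let $\xi$ be the unique fixed point of $T$ given by Theorem \ref{BCPext} and set $N := n(\xi)$. Since $T\xi = \xi$, we have $T^N\xi = \xi$, so applying \eqref{BCPexteq} at $x=\xi$ with $y=z$ yields
\[
G(\xi,T^N y,T^N y) \;=\; G(T^N\xi,T^N y,T^N y) \;\leq\; \lambda\, G(\xi,y,y)
\]
for every $y\in X$. Iterating this inequality $s$ times gives
\[
G(\xi,T^{sN}y,T^{sN}y) \;\leq\; \lambda^s\, G(\xi,y,y).
\]

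Now fix any $x_0\in X$, and for each $n\geq 0$ use the division algorithm to write $n = sN + t$ with $0\leq t\leq N-1$. Applying the previous estimate to $y = T^t x_0$ and setting
\[
M \;:=\; \max\bigl\{G(\xi,T^t x_0, T^t x_0):\; 0\leq t\leq N-1\bigr\},
\]
which is a maximum over finitely many values and hence finite, we obtain
\[
G(\xi,T^n x_0,T^n x_0) \;=\; G\bigl(\xi,T^{sN}(T^t x_0),T^{sN}(T^t x_0)\bigr) \;\leq\; \lambda^s M.
\]
As $n\to\infty$ we have $s\to\infty$, so the right-hand side tends to $0$. By Proposition \ref{prop1}(iii), this is exactly the statement that $T^n x_0 \to \xi$ in $(X,G)$, completing the proof.

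The only mildly delicate step is the first one, namely recognizing that continuity of $T$ and the convergence of the auxiliary sequence $\{x_n\}$ in Theorem \ref{BCPext} are not by themselves enough to conclude convergence of the full orbit; one has to observe that the contractive hypothesis, when localized at $\xi$, produces a Bryant-type single contraction $G(\xi,T^N\cdot,T^N\cdot)\leq \lambda G(\xi,\cdot,\cdot)$, and then the standard division-algorithm argument handles the remaining residues $t<N$ uniformly via the finite maximum $M$.
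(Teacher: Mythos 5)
Your proof is correct and takes essentially the same route as the paper: both localize the contractive condition at the fixed point $\xi$ via $T^{n(\xi)}\xi=\xi$, write $n = s\,n(\xi)+t$ by the division algorithm, and bound $G(\xi,T^nx_0,T^nx_0)$ by $\lambda^s$ times a finite maximum over the residues $t$. (A minor point in your favor: your maximum $M$ includes the residue $t=0$, whereas the paper's $\eta$ runs only over $m=1,\dots,n(\xi)-1$ and so strictly speaking misses the case $q=0$.)
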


\begin{proof}
According to the proof of Theorem \ref{BCPext}, there exists a unique $\xi$ such that $T\xi=\xi$. Now to show that $T^nx_0$ $G$-converges to $\xi$, we set
\[
\eta = \max \{ G(\xi,T^mx_0,T^mx_0):m=1,2,\cdots,n(\xi)-1\}.
\] 

For $n$ sufficiently large, then we know that there exists $(r,q) \in \mathbb{N}^2$ such that 

$$ 
n = r.n(\xi) +q, \ 0\leq q < n(\xi), \ r>0,
$$  

and 

\begin{align*}
G(\xi,T^nx_0,T^nx_0) & = G(T^{n(\xi)}\xi,T^{r.n(\xi) +q}x_0,T^{r.n(\xi) +q}x_0) \\
& \leq \lambda G(\xi,T^{(r-1).n(\xi) +q}x_0,T^{(r-1).n(\xi) +q}x_0)\leq \cdots \\
& \leq  \lambda^r G(\xi,T^{q}x_0,T^{q}x_0)\leq \lambda^r \eta.
\end{align*}

Moreover, since  $$\frac{n-q}{n(\xi)} = r \Longrightarrow \lim_{n\to \infty }\frac{n-q}{n(\xi)} =\lim_{n\to \infty } r = \infty,$$
we have 

\[
G(\xi,T^nx_0,T^nx_0) \leq \lambda^r \eta \to 0 \text{ as } n \to \infty,
\]
i.e. $T^nx_0$ $G$-converges to the unique fixed point to $\xi$.

\end{proof}

Next, we provide an example to illustrate Theorem \ref{BCPext}. The function we consider satifies \eqref{BCPexteq} but is not a contraction\footnote{In fact, none of its powers is a contraction.} and we make use of a well-known set $X$.

\begin{example}
Let $X=[0,1]$ that we write in the form 
\[
X= \bigcup_{n=1}^{\infty} \left[ \frac{1}{2^n},\frac{1}{2^{n-1}}\right]\cup \{0\},
\]

and let's endow $X$ with the $G$-metric $d$, defined as
\[
d(x,y,z)= \max\{|x-y|,|y-z|,|z-x|\} \ \ \text{for all } x,y,z \in X.
\]
Let $T:X\to X$ be defined as follows:

\[
Tx= \begin{cases}
\frac{1}{2^{n+1}}, \qquad \qquad \qquad \quad \quad \text{   if } x\in \left[\frac{1}{2^n},\frac{3n+5}{2^{n+1}(n+2)}\right]\\
\frac{n+2}{n+3}\left( x-\frac{1}{2^{n-1}}\right)+\frac{1}{2^n}, \quad \text{ if } x\in \left[ \frac{3n+5}{2^{n+1}(n+2)},\frac{1}{2^{n-1}}\right]
\end{cases}
\]
and $T(0)=0$.

Actually $T$ maps the interval $I_n:=\left[ \frac{1}{2^n},\frac{1}{2^{n-1}}\right] $ onto the interval $I_{n+1}$.

The function $T$ is a continuous function on $[0,1]$ which leaves only $0$ fixed but is not a contraction. Moreover, straightforward computations, considering all the possible cases for $x\in I_n$ and $y\in I_m$ (with $m\geq n$ and $m\leq n$), lead to 

$$|Tx-Ty| \leq \frac{n+3}{n+4}\ |x-y| \ \ \text{ for all } y\in X.$$

Therefore, if we choose $\lambda=\frac{1}{2}$ in \eqref{BCPexteq}, then for each $x\in \left[ \frac{1}{2^n},\frac{1}{2^{n-1}}\right] $, one can take $n(x)$ to be 
$
n(x) =n+3
$ and for $x=0$, one just requires that $n(0)$ be such that $n(0)\geq 1.$

\end{example}

\section{Generalizations}

In this section, we present results which extend Theorem \ref{BCPext} along with Corollary \ref{BCPcor}. In fact, we look at mappings which are not necessarily continous, and satisfy a weaker form of \eqref{BCPexteq} for a proper subset of $X$. Moreover, we show that Theorem \ref{BCPext} remains true when the hypothesis of continuity is removed. We provide examples to illustraste the actual extensions.
The proofs we present are merely copies of the ones already done for Theorem \ref{BCPext} and Corollary \ref{BCPcor}.

\begin{lemma}\label{lemExt1}
Let $(X,G)$ be a $G$-metric space and $T:X\to X$ a mapping. Let $B\subset X$ with $T(B)\subset B$. If there exists $u\in B$ and a positive integer $n(u)$ such that $T^{n(u)}=u$ and 
\begin{equation}\label{lemext1eq}
 G(T^{n(u)}u,T^{n(u)}x,T^{n(u)}y) \leq \lambda \ G(u,x,y),
\end{equation}
for some some $\lambda<1$ and all $x,y\in B$, then $u$ is the unique fixed point of $T$ in $B$ and the sequence of iterates $T^nx_0, n=1,2,\cdots$, $G$-converges to $u$ for any initial datum $x_0\in B$.
\end{lemma}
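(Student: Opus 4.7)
The plan is to exploit the single assumption that $u$ is a fixed point of $T^{n(u)}$ together with the contractive inequality restricted to orbits through $u$. Write $m=n(u)$ for short. The statement as written seems to mean $T^{m}u = u$, so that the given inequality reads $G(u,T^{m}x,T^{m}y)\leq \lambda G(u,x,y)$ for $x,y\in B$.

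First I would verify that $u$ itself is actually a fixed point of $T$, not just of $T^{m}$. Substituting $x=y=Tu$ in the contractive inequality, and using $T^{m+1}u=T(T^{m}u)=Tu$, one gets
\[
G(u,Tu,Tu)=G(T^{m}u,T^{m+1}u,T^{m+1}u)\leq \lambda\, G(u,Tu,Tu),
\]
which forces $G(u,Tu,Tu)=0$ and hence $Tu=u$.

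Second I would iterate the contraction to get convergence along the subsequence indexed by multiples of $m$: for every $z\in B$ and every $k\geq 0$,
\[
G(u,T^{km}z,T^{km}z)\leq \lambda^{k}G(u,z,z),
\]
by an easy induction that rewrites $G(u,T^{(k+1)m}z,T^{(k+1)m}z)$ as $G(T^{m}u,T^{m}(T^{km}z),T^{m}(T^{km}z))$. To pass from this subsequence to the full iterate sequence, for a fixed $x_{0}\in B$ and an arbitrary $n\geq 0$ I would write $n=km+r$ with $0\leq r<m$ and apply the previous bound to $z=T^{r}x_{0}$, which lies in $B$ because $T(B)\subset B$. Setting $M=\max_{0\leq r<m}G(u,T^{r}x_{0},T^{r}x_{0})$, a finite quantity, gives
\[
G(u,T^{n}x_{0},T^{n}x_{0})\leq \lambda^{k}M\longrightarrow 0,
\]
since $k=\lfloor n/m\rfloor\to\infty$. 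By Proposition \ref{prop1} this means $T^{n}x_{0}$ is $G$-convergent to $u$.

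Finally, uniqueness of $u$ in $B$ is immediate: if $v\in B$ satisfies $Tv=v$ then in particular $T^{n}v=v$ for every $n$, so the convergence just proved applied with $x_{0}=v$ gives $v=u$. (Equivalently, one could run the contractive inequality directly on $(u,v,v)$.) The only non-routine step is the splitting $n=km+r$ needed to pass from the subsequence $\{T^{km}x_{0}\}$ to the full sequence $\{T^{n}x_{0}\}$; everything else is a direct copy of the argument used in the proof of Theorem \ref{BCPext}, as the author already announces.
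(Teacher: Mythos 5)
Your proposal is correct and follows essentially the same route as the paper: derive $Tu=u$ from the fact that $u$ is the (unique) fixed point of $T^{n(u)}$ in $B$, then use the division algorithm $n=km+r$ together with the iterated contraction at $u$ and the finite maximum over the remainders to get $G(u,T^nx_0,T^nx_0)\to 0$. Your direct substitution $x=y=Tu$ to get $G(u,Tu,Tu)\leq\lambda\,G(u,Tu,Tu)$ is a slightly more self-contained version of the paper's commutation argument $T(T^{n(u)}u)=T^{n(u)}(Tu)$, and your maximum correctly includes the remainder $r=0$, which the paper's $\eta(x_0)$ technically omits; these are cosmetic improvements, not a different proof.
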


\begin{proof}
By \eqref{lemext1eq}, it is clear that $u$ is the unique fixed point of $T^{n(u)}$ in $B$. On the other hand, observe that
\[
T(u) = T(T^{n(u)}u) = T^{n(u)}(Tu) \Longrightarrow Tu=u,
\]
and $u$ is then the fixed point of $T$ in $B$.
\end{proof}
For any initial datum $x_0\in B$, since $T(B)\subset B$, we have that $T^nx_0 \in B,$ whenever $ n=1,2,\cdots$. Let's set 
\[
\eta(x_0) = \max\{ G(u,T^mx_0,T^mx_0),m=1,2,\cdots,n(u)-1\},
\]
and for $n$ large enough, there exists $(r,s) \in \mathbb{N}^2$ such that 

$$ 
n = r.n(u) +s, \ 0\leq s < n(u), \ r>0.
$$  

Then

\begin{align*}
G(u,T^nx_0,T^nx_0) & = G(T^{n(u)}u,T^{r.n(u) +s}x_0,T^{r.n(u) +s}x_0) \\
& \leq \lambda G(u,T^{(r-1).n(u) +s}x_0,T^{(r-1).n(u) +s}x_0)\leq \cdots \\
& \leq  \lambda^r G(u,T^{s}x_0,T^{s}x_0)\leq \lambda^r \eta(x_0) \to 0 \text{ as } n \to \infty,
\end{align*}

i.e. $T^nx_0\to u \text{ as } n \to \infty$.


\begin{theorem}\label{thmExt}
Let $(X,G)$ be a complete $G$-metric space and $T:X\to X$ a mapping. Suppose there exists $B\subset X$ which satisfies:

\begin{enumerate}
\item[(i)] $T(B)\subset B$

\item[(ii)]
For some $0<\lambda <1$ and each $x \in B$, there exists a positive integer $n(x)\geq 1$ with 

\[
G(T^{n(x)}x,T^{n(x)}y,T^{n(x)}z) \leq \lambda \ G(x,y,z),
\]
for all $y,z\in B$,

\item[(iii)]
For some $x_0 \in B$, $cl(\{T^nx_0,n\geq 1\})\footnote{$cl$ denotes the closure with respect to the topology genererated by $G$, see \cite{Mustafa}.}\subset B.$
\end{enumerate}

Then there exists a unique $\xi \in B$such that $T\xi=\xi$ and  $T^nx_0\to \xi \text{ as } n \to \infty$ for each $x_0\in B$. 

\end{theorem}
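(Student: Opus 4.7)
The plan is to run the construction from the proof of Theorem \ref{BCPext} inside $B$, obtain a candidate limit $\xi \in B$, and then invoke Lemma \ref{lemExt1} after verifying that $\xi$ is a fixed point of $T^{n(\xi)}$. Starting from the specific $x_0 \in B$ provided by hypothesis (iii), I would define inductively $x_{n+1} = T^{n(x_n)}(x_n)$; condition (i) keeps the sequence in $B$, and by induction $x_n = T^{k_n}(x_0)$ for an increasing sequence of integers $k_n$. The proof of Lemma \ref{lem1} applies unchanged since it only invokes (ii) on points already in $B$, so $r(x_0) < \infty$. The telescoping estimate from the proof of Theorem \ref{BCPext} then gives $G(x_n, x_m, x_m) \leq \frac{\lambda^n}{1-\lambda}\, r(x_0)$ for $m > n$; by $G$-completeness the sequence converges to some $\xi \in X$, and because $\{x_n\} \subset \{T^j x_0 : j \geq 1\}$, hypothesis (iii) forces $\xi \in B$.

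The step where the continuity assumption of Theorem \ref{BCPext} must be replaced is the verification that $T^{n(\xi)} \xi = \xi$. My substitute is the following uniform-in-$j$ estimate: applying (ii) at $x_{n-1}$ with $y = z = T^j x_{n-1}$ (which lies in $B$ by invariance) gives
\[
G(x_n, T^j x_n, T^j x_n) \leq \lambda\, G(x_{n-1}, T^j x_{n-1}, T^j x_{n-1})
\]
for every $j \geq 0$, since $T^{n(x_{n-1})} T^j x_{n-1} = T^j x_n$. Iterating down to $x_0$ yields $G(x_n, T^j x_n, T^j x_n) \leq \lambda^n r(x_0)$, uniformly in $j$. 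Specializing to $j = n(\xi)$ and combining two applications of (G5), the symmetry (G4), and the contractive condition (ii) at $\xi$ with $y = z = x_n$, one obtains
\[
G(\xi, T^{n(\xi)} \xi, T^{n(\xi)} \xi) \leq G(\xi, x_n, x_n) + \lambda^n r(x_0) + \lambda\, G(\xi, x_n, x_n),
\]
and letting $n \to \infty$ forces the left-hand side to vanish, so $T^{n(\xi)} \xi = \xi$.

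The hardest single point is precisely that uniform-in-$j$ estimate; it is the genuinely new ingredient replacing continuity, and it leans critically on the invariance $T(B) \subset B$ so that the shifted iterates $T^j x_{n-1}$ remain legal inputs to (ii). With $T^{n(\xi)} \xi = \xi$ in hand, Lemma \ref{lemExt1} applied with $u = \xi$ delivers $T\xi = \xi$, the uniqueness of $\xi$ as the fixed point of $T$ in $B$ (from the inequality (ii) at $\xi$ applied to a competing fixed point), and the convergence $T^n x_0' \to \xi$ for every $x_0' \in B$ (the closing paragraph of the proof of Lemma \ref{lemExt1} uses only $T(B) \subset B$ and the contractive inequality at $\xi$, both available here).
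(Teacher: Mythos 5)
Your proposal is correct and follows essentially the same route as the paper: the same iterate construction $x_{i+1}=T^{n(x_i)}x_i$ and telescoping Cauchy estimate via Lemma \ref{lem1}, the same use of hypothesis (iii) to place the limit $\xi$ in $B$, the same key bound $G(x_n,T^{n(\xi)}x_n,T^{n(\xi)}x_n)\le \lambda^{n}\,G(x_0,T^{n(\xi)}x_0,T^{n(\xi)}x_0)\to 0$, and the same appeal to Lemma \ref{lemExt1} to conclude. The only cosmetic difference is at the step $G(\xi,T^{n(\xi)}\xi,T^{n(\xi)}\xi)=0$: the paper passes to the limit inside $G$ (using that $T^{n(\xi)}x_n\to T^{n(\xi)}\xi$, which follows from (ii) at $\xi$), whereas you use (G5) twice explicitly; note your displayed inequality should carry a harmless factor of $2$ in the last term, since in a general $G$-metric space one only has $G(a,b,b)\le 2\,G(b,a,a)$, which does not affect the conclusion.
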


\begin{proof}
We know from Lemma \ref{lem1} that, for any $x\in B,$
$
r(x) = \sup_n G(x, T^nx,T^nx)<\infty.
$
Let $x_0$ as in hypothesis (iii) and using (i) and (ii), let's construct the sequence of iterates 
\[
x_1 = T^{n(x_0)}(x_0), \text{ and } x_{i+1} = T^{n(x_i)}(x_i).
\]
as in the proof of Theorem \ref{BCPext}.
It is therefore easy to see, by routine calculation, that 

\[
G(x_i,x_{i+1},x_{i+1}) \leq \lambda^i G(x_0,T^{m_i}x_0,T^{m_i}x_0)\leq \lambda^i r(x_0), \qquad i\geq 1,
\]
and 

\[
G(x_i,x_{i+k},x_{i+k}) \leq \sum_{l=1}^{i+k-1}G(x_l,x_{l+1},x_{l+1}) \leq \frac{\lambda^i}{1-\lambda}r(x_0),\qquad k\geq 1.
\]
It follows that $\{x_n\}$ is a $G$-Cauchy sequence.
Moreover, since $X$ is $G$-complete, using hypothesis (iii) there exists $\xi \in B$ such that $\{x_n\}$ $G$-converges to $\xi$. Hence, there exists $n(\xi)\geq 1$ such that 

\[
G(T^{n(\xi)}\xi,T^{n(\xi)}y,T^{n(\xi)}z) \leq \lambda \ G(\xi,y,z),
\]
for all $y,z\in B$ and $\{T^{n(\xi)}x_n\}$ $G$-converges to $T^{n(\xi)}\xi$, i.e.
\[
\lim_{n\to \infty} G(x_n,T^{n(\xi)}x_n,T^{n(\xi)}x_n)= G(\xi,T^{n(\xi)}\xi,T^{n(\xi)}\xi).
\]

On the other hand 
\begin{align*}
G(x_n,T^{n(\xi)}x_n,T^{n(\xi)}x_n) & = G(T^{m_{n-1}}x_{n-1},T^{m_{n-1}}T^{n(\xi)}x_{n-1},T^{m_{n-1}}T^{n(\xi)}x_{n-1})\\
& \leq \lambda G(x_{n-1},T^{n(\xi)}x_{n-1},T^{n(\xi)}x_{n-1})\leq \cdots \\
& \leq \lambda^n G(x_{0},T^{n(\xi)}x_{0},T^{n(\xi)}x_{0})\to 0 \text{ as } n\to \infty.
\end{align*}

By Lemma \ref{lemExt1}, $\xi$ is the unique fixed point of $T$ in $B$ and $T^nx_0 \to \xi$ for any initial datum $x_0\in B.$

\end{proof}

\begin{corollary}\label{corBCPext}

Let $T$ be a map satifying the conditions of Theorem \ref{thmExt}. If moreover $G(T^{n(\xi)}\xi,T^{n(\xi)}x,T^{n(\xi)}y) \leq \lambda \ G(\xi,x,y),$ then $\xi$ is the unique fixed point of $T$ in $X$ and the sequence of iterates $T^nx_0, n=1,2,\cdots$, $G$-converges to $\xi$ for any initial datum $x_0\in X$.

\end{corollary}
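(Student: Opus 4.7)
The plan is to leverage the strengthened contractive inequality at $\xi$---which by hypothesis now holds for all $x,y\in X$ rather than only for points of $B$---to upgrade both the uniqueness and the convergence guaranteed by Theorem~\ref{thmExt} from the subset $B$ to the entire space $X$. From Theorem~\ref{thmExt} we already have $\xi\in B$ with $T\xi=\xi$ and $T^n x_0\to\xi$ for every $x_0\in B$.

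First I would dispatch uniqueness in $X$. Given any $\eta\in X$ with $T\eta=\eta$, we have $T^{n(\xi)}\eta=\eta$ and $T^{n(\xi)}\xi=\xi$, so the extended inequality applied with $x=y=\eta$ gives $G(\xi,\eta,\eta)\le \lambda\,G(\xi,\eta,\eta)$, forcing $G(\xi,\eta,\eta)=0$ and hence $\xi=\eta$ by (G2).

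For the global convergence claim, my plan is to mimic the Euclidean-division argument used in the proof of Corollary~\ref{corBCP}, but with the improved inequality of the hypothesis in place of condition~(ii). For an arbitrary $x_0\in X$, first record the finite quantity
\[
\eta(x_0) = \max\{G(\xi,T^m x_0,T^m x_0):m=0,1,\ldots,n(\xi)-1\}.
\]
For $n$ large, the Archimedean property furnishes integers $r>0$ and $0\le s<n(\xi)$ with $n = r\cdot n(\xi) + s$. Using $T^{n(\xi)}\xi=\xi$ together with an $r$-fold application of the extended contraction---valid at arbitrary points of $X$---yields
\[
G(\xi,T^n x_0,T^n x_0) \le \lambda^r\, G(\xi, T^s x_0, T^s x_0) \le \lambda^r \eta(x_0),
\]
and letting $r\to\infty$ (equivalently $n\to\infty$) finishes the argument.

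The only real subtlety, and hence the main obstacle, is precisely the reason the corollary requires its extra hypothesis: for $x_0\notin B$ the intermediate iterates $T^j x_0$ need not lie in $B$, so condition~(ii) of Theorem~\ref{thmExt} alone would be insufficient to telescope the estimate above; the reinforced inequality at $\xi$ is exactly what supplies the required bound at each step, and with it in hand the proof reduces to a transcription of Corollary~\ref{corBCP}.
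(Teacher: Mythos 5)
Your proposal is correct and is essentially the paper's argument: the paper simply applies Lemma \ref{lemExt1} with $B=X$, $u=\xi$ (using $T^{n(\xi)}\xi=\xi$ from Theorem \ref{thmExt} and the strengthened inequality on all of $X$), and the proof of that lemma is precisely the uniqueness computation plus the Euclidean-division telescoping you write out. Your version merely inlines the lemma's proof, and in fact is slightly more careful in taking the maximum over $m=0,1,\ldots,n(\xi)-1$ so that the case $s=0$ is covered.
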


\begin{proof}
The reult follows directly from Lemma \ref{lemExt1}.
\end{proof}

We now present two examples.

\begin{example}
Let $X=[0,1]$ and let's endow $X$ with the $G$-metric $d$, defined as
\[
d(x,y,z)= \max\{|x-y|,|y-z|,|z-x|\} \ \ \text{for all } x,y,z \in X.
\]
Let $T:X\to X$ be defined as follows:

\[
Tx= \begin{cases}
1-x, \quad  \text{   if } x\in \mathbb{Q}\cap X,\\
\frac{1}{2}, \quad \qquad \text{otherwise. }
\end{cases}
\]
Then $T\left(\frac{1}{2}\right)=\frac{1}{2} $ and for any $\lambda<1$, let $B=\left\lbrace\frac{1}{2}\right\rbrace\cup A $ where $A$ is any collection of irrationals in $[0,1].$ For a rational number $y\neq \frac{1}{2}$, we have $1-y\neq \frac{1}{2}$. Moreover, $B$ cannot any rational  $x\neq \frac{1}{2}.$ 
\end{example}

\begin{example}
Let $X=[0,1]$ and let's endow $X$ with the $G$-metric $d$, defined as
\[
d(x,y,z)= \max\{|x-y|,|y-z|,|z-x|\} \ \ \text{for all } x,y,z \in X.
\]
Let $T:X\to X$ be defined as follows:

\[
Tx= \begin{cases}
x, \ \ \ \ \ \ \ \qquad  \text{   if } x\in \mathbb{Q}\cap X,\\
1-x, \quad \qquad \text{otherwise. }
\end{cases}
\]
Then each rational is a fixed point and for any $\lambda<1$, let $B=\{x\}$, $x$ rational.
\end{example}

In the last section of this paper, we shall be concerned with a triplet\footnote{The author plans to study more thoroughly and with examples common fixed point results for families of self-mappings in another paper \cite{Gaba6}.} of mappings which satisfy a contractive condition similar to the one we discussed above, namely:

let $T_1,T_2,T_3$ be self-mappings of a complete $G$-metric space $(X,G)$ such that there exists a constant $\lambda, \ 0<\lambda<1$ such that there exist positive integers $n(x),m(y),k(z)$ such that for each $x,y,z \in X,$

\begin{align}\label{eqfin}
G(T_1^{n(x)}x,T_2^{m(y)}y,T_3^{k(z)}z)  \leq &\  \lambda \max\{ A,
 B,C,D,E\},
\end{align}
where 
\begin{align*}
A &= G(x,y,z), \\
B &= G(x,T_1^{n(x)}x,T_1^{n(x)}x), \ C = G(y,T_2^{m(y)}y,T_2^{m(y)}y),\ D= G(z,T_3^{k(z)}z,T_3^{k(z)}z), \\
E &= \frac{1}{4}[G(T_1^{n(x)}x, y,z)+G(x,T_2^{m(y)}y,z) + G(x,y,T_3^{k(z)}z)].
\end{align*}

\begin{theorem}\label{thmfin}
Let $T_1,T_2,T_3$ be self-mappings on a complete $G$-metric space $(X,G)$ which satisfy \eqref{eqfin}. Then $T_1,T_2$ and $T_3$ have a unique common fixed point.
\end{theorem}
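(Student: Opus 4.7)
The plan is to follow the pattern of the single-map proof above, now cycling through $T_1, T_2, T_3$. Fix $x_0 \in X$ and recursively define $x_{3j+1} = T_1^{n(x_{3j})} x_{3j}$, $x_{3j+2} = T_2^{m(x_{3j+1})} x_{3j+1}$, $x_{3j+3} = T_3^{k(x_{3j+2})} x_{3j+2}$ for $j \geq 0$. Applying \eqref{eqfin} to the triple $(x_{3j}, x_{3j+1}, x_{3j+2})$ gives $G(x_{3j+1}, x_{3j+2}, x_{3j+3}) \leq \lambda \max\{A_j, B_j, C_j, D_j, E_j\}$, where each quantity involves only the four consecutive iterates $x_{3j}, \ldots, x_{3j+3}$. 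The crucial step is to argue that none of $C_j, D_j, E_j$ can produce an effective bound on the iteration: by (G3) combined with the symmetry (G4), both $C_j$ and $D_j$ are majorized by the left-hand side, so if the max were either of these, the inequality $L \leq \lambda L$ forces $L = 0$; similarly the three summands of $E_j$ are controlled via (G5) by $A_j$ together with the left-hand side. After absorbing the left-hand-side contributions (using $\lambda < 1$), one obtains $G(x_{3j+1}, x_{3j+2}, x_{3j+3}) \leq \mu\, G(x_{3j}, x_{3j+1}, x_{3j+2})$ with a uniform $\mu \in (0,1)$, and an identical argument with the roles of the slots rotated yields the analogous recursion $\gamma_{n+1} \leq \mu \gamma_n$ for $\gamma_n := G(x_n, x_{n+1}, x_{n+2})$ at every index $n$.

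Geometric decay of $\gamma_n$ together with repeated use of (G5) shows $\{x_n\}$ is $G$-Cauchy, so completeness delivers a limit $\xi \in X$. To prove $T_1 \xi = \xi$, I would apply \eqref{eqfin} with $(x, y, z) = (\xi, x_{3j+1}, x_{3j+2})$ and let $j \to \infty$: four of the five terms on the right vanish in the limit, while the remaining one and the term $G(\xi, T_1^{n(\xi)}\xi, T_1^{n(\xi)}\xi)$ must be reabsorbed into the left-hand side via the standard $G$-metric estimate $G(x,y,y) \leq 2 G(y,x,x)$, ultimately forcing $T_1^{n(\xi)}\xi = \xi$. The same argument, placing $\xi$ in the second or third slot, yields $T_2^{m(\xi)}\xi = \xi$ and $T_3^{k(\xi)}\xi = \xi$. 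Lemma \ref{lemExt1} then upgrades each of these to $T_i \xi = \xi$. Uniqueness is immediate: for two common fixed points $\xi, \eta$, applying \eqref{eqfin} at $(\xi, \eta, \eta)$ makes $B = C = D = 0$, while $A = G(\xi, \eta, \eta)$ and $E = \tfrac{3}{4} G(\xi, \eta, \eta)$, yielding $G(\xi, \eta, \eta) \leq \lambda\, G(\xi, \eta, \eta)$ and hence $\xi = \eta$.

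The main obstacle I expect is the bookkeeping for the composite term $E$ during the decay step. Its third summand $G(x_{3j}, x_{3j+1}, x_{3j+3})$ has mixed indices, and one must split it by (G5) through the intermediate iterate $x_{3j+2}$, after which care is needed to verify that the coefficients accumulated on the left-hand side, combined with the $\tfrac{1}{4}$ prefactor in $E$ and the overall $\lambda < 1$, still leave a strict contraction ratio. A parallel delicacy arises in the fixed-point identification step: because $G(\xi, T_1^{n(\xi)}\xi, T_1^{n(\xi)}\xi)$ is not literally equal to $G(T_1^{n(\xi)}\xi, \xi, \xi)$, one must invoke a symmetrization inequality (or pass $\xi$ through all three slots and combine the resulting estimates) to close the loop cleanly.
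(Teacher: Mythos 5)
Your proposal follows the paper's proof essentially step for step: the same three-phase iteration $x_{3j+1}=T_1^{n(x_{3j})}x_{3j}$, $x_{3j+2}=T_2^{m(x_{3j+1})}x_{3j+1}$, $x_{3j+3}=T_3^{k(x_{3j+2})}x_{3j+2}$, the same case analysis on the maximum in \eqref{eqfin} (discarding the terms dominated by the left-hand side and bounding $E$ by a combination of consecutive $G$-values to get $G(x_{n},x_{n+1},x_{n+2})\le\lambda\, G(x_{n-1},x_{n},x_{n+1})$), the same Cauchy/limit argument, the same identification of $\xi$ by applying \eqref{eqfin} to $(\xi,x_{3j+1},x_{3j+2})$ and passing to the limit, the same uniqueness computation at $(\xi,\eta,\eta)$, and the same commuting trick $T_1(T_1^{n(\xi)}\xi)=T_1^{n(\xi)}(T_1\xi)$ to upgrade to a genuine common fixed point. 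The one delicacy you flag --- that $G(\xi,T_1^{n(\xi)}\xi,T_1^{n(\xi)}\xi)$ need not equal $G(T_1^{n(\xi)}\xi,\xi,\xi)$ --- is present in the paper's argument too, which disposes of it by simply asserting $G(T_1^{n(\xi)}\xi,T_1^{n(\xi)}\xi,\xi)\le G(T_1^{n(\xi)}\xi,\xi,\xi)$ rather than via the factor-$2$ symmetrization you propose (which, as you suspect, would only close the loop for $\lambda<1/2$).
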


\begin{proof}
Let $x_0 \in X$, and define the sequence $\{x_n\}$ by 
\[
x_1 = T_1^{n(x_0)}x_0, x_2 = T_2^{m(x_1)}x_1, x_3 = T_3^{k(x_2)}x_2 = \cdots,\]

\[ x_{3n+1}= T_1^{n(x_{3n})}x_{3n}, \ \ x_{3n+2} = T_2^{m(x_{3n+1})}x_{3n+1}, \ \ x_{3n+3}= T_3^{k(x_{3n+2})}x_{3n+2}.
\]

Using \eqref{eqfin} and assuming, without loss of generality, that $x_n\neq x_m$ for each $n\neq m$,

\begin{align}\label{weget}
G(x_{3n+1},x_{3n+2},x_{3n+3}) \leq  \lambda \max \left\lbrace 
A'_n ,B,C,D,\frac{1}{4}[B_n+A_n+C_n]  \right\rbrace
\end{align}

with

\[
B= G(x_{3n},x_{3n+1}, x_{3n+1}), \ C = G(x_{3n+1},x_{3n+2}, x_{3n+2}), \ D= G(x_{3n+2},x_{3n+3}, x_{3n+3}),
\]

\vspace*{0.5cm}

\begin{align*}
A_n &= G(x_{3n},x_{3n+2}, x_{3n+2}), \ \ A'_n=G(x_{3n},x_{3n+1}, x_{3n+2}),\\ \\
B_n &= G(x_{3n+1},x_{3n+1}, x_{3n+2}), \ \ C_n = G(x_{3n},x_{3n+1}, x_{3n+3}). \\
\end{align*}

First observe that, if we set $ B'_n:=G(x_{3n+3},x_{3n+1}, x_{3n+2})$, we have 

\begin{align*}
B & = G(x_{3n},x_{3n+1}, x_{3n+1}) \leq G(x_{3n},x_{3n+1},
 x_{3n+2})=A'_n, \\
 C& = G(x_{3n+1},x_{3n+2}, x_{3n+2})\leq G(x_{3n},x_{3n+1}, x_{3n+2})=A'_n
\\
D & = G(x_{3n+2},x_{3n+3}, x_{3n+3}) \leq G(x_{3n+1},x_{3n+2},x_{3n+3})=B'_n,\\ 
\end{align*}
and 

\[  
A_n \leq A'_n,  \ \ \  B_n \leq B'_n,  \ \ \text{ and   } \  C_n  \leq A'_n+ B'_n.
\]

\vspace*{0.5cm}

Therefore
\[
\frac{1}{4}[A_n+B_n+C_n] \leq \frac{1}{2}[G(x_{3n},x_{3n+1}, x_{3n+2})+G(x_{3n+3},x_{3n+1}, x_{3n+2})].
\]

\vspace*{0.5cm}

Hence, inequality \eqref{weget} becomes

\begin{align}\label{weget1}
B_n'=G(x_{3n+1},x_{3n+2},x_{3n+3}) \leq  \lambda \max \left\lbrace 
A'_n ,B_n',\frac{1}{4}[B_n+A_n+C_n]  \right\rbrace
\end{align}

which yields 
 \[
 G(x_{3n+1},x_{3n+2},x_{3n+3}) \leq  \lambda G(x_{3n},x_{3n+1}, x_{3n+2}).
 \]


 Indeed, if
 \[
 \max \left\lbrace A'_n,
 B_n',
\frac{1}{4}[A_n+B_n+C_n]  \right\rbrace = G(x_{3n+1},x_{3n+2},x_{3n+3})=B_n'
 \]

we get a contradiction, since $\lambda<1$. So $B_n'< A'_n.$

\vspace*{1cm}

Similarly, if

 \[
 \max \left\lbrace A'_n,
 B_n',
\frac{1}{4}[A_n+B_n+C_n]  \right\rbrace = \frac{1}{4}[A_n+B_n+C_n] \ \ \left( \leq \frac{1}{2}[ B_n'+A'_n] \right),
 \]
 
we get a contradiction because 

\[
  B_n'\leq \frac{1}{2}[ B_n'+A'_n] \leq A'_n\footnote{Remember that for two positive reals $0<\alpha< \beta$, we have
$\alpha < \frac{\alpha+\beta}{2} < \beta$.}.
\]

We then conclude that

\[
 \boxed{G(x_{3n+1},x_{3n+2},x_{3n+3}) \leq  \lambda\  G(x_{3n},x_{3n+1}, x_{3n+2})}.
 \]

In the same manner, it can be shown that 

\[
 \boxed{G(x_{3n+2},x_{3n+3},x_{3n+4}) \leq  \lambda \ G(x_{3n+1},x_{3n+2}, x_{3n+3})} ,
 \]

and 
\[
\boxed{G(x_{3n},x_{3n+1},x_{3n+2}) \leq  \lambda \ G(x_{3n-1},x_{3n},x_{3n+1})},
\]
so that 
\begin{align*}
G(x_{3n},x_{3n+1},x_{3n+2}) \leq & \ \lambda^{3n} G(x_0,x_1,x_2), \\
 G(x_{3n+1},x_{3n+2},x_{3n+3}) \leq & \  \lambda^{3n}G(x_1,x_2,x_3), \\
 G(x_{3n+2},x_{3n+3},x_{3n+4}) \leq & \ \lambda^{3n}G(x_2,x_3,x_4).
\end{align*}

Therefore, for all $n$
\[
\boxed{G(x_{n},x_{n+1},x_{n+2}) \ \leq \ \lambda^{n} \ r(x_0)},
\]

with $$r(x_0)=\max\{G(x_0,x_1,x_2),G(x_1,x_2,x_3),G(x_2,x_3,x_4) \}.$$

Hence for any $l>m>n,$ we have

\begin{align*}
G(x_n,x_m,x_l) & \leq G(x_n,x_{n+1},x_{n+1})+ G(x_{n+1},x_{n+2},x_{n+2})\\
               & +\cdots+  G(x_{l-1},x_{l-1},x_l)\\
               & \leq G(x_n,x_{n+1},x_{n+2})+ G(x_{n+1},x_{n+2},x_{n+3})\\
             & +  \cdots+  G(x_{l-2},x_{l-1},x_l)\\
             & \leq  \frac{\lambda^{n}}{1-\lambda}r(x_0).
\end{align*}

Similarly, for the cases  $l=m>n,$  and  $l>m=n,$ we have 

\[
G(x_n,x_m,x_l)  \leq \frac{\lambda^{n-1}}{1-\lambda}r(x_0).
\]

Thus is $\{x_n\}$ is $G$-Cauchy and hence $G$-converges. Call the limit $\xi.$ From \eqref{eqfin}, we have 

\begin{equation}\label{limici}
G(T_1^{n(\xi)}\xi,x_{3n+2},x_{3n+3}) = G(T_1^{n(\xi)}\xi,T_2^{m(x_{3n+1})}x_{3n+1},T_3^{k(x_{3n+2})}x_{3n+2})\leq \lambda \max\{A,B,C,D,E\}, 
\end{equation}

with

\[
A= G(\xi,x_{3n+1},x_{3n+2} ), \ \ \  B = G(\xi,T_1^{n(\xi)}\xi,T_1^{n(\xi)}\xi)\]

\[ C = G(x_{3n+1},T_2^{m(x_{3n+1})}x_{3n+1},T_2^{m(x_{3n+1})}x_{3n+1}), \ \ \ D= G(x_{3n+2},T_3^{k(x_{3n+2})}x_{3n+2},T_3^{k(x_{3n+2})}x_{3n+2}), \]

\[ E= \frac{1}{4}[G(T_1^{n(\xi)}\xi,x_{3n+1},x_{3n+2})+G(\xi,T_2^{m(x_{3n+1})}x_{3n+1},x_{3n+2} )+G(\xi,x_{3n+1},T_3^{k(x_{3n+2})}x_{3n+2})].
\]

\vspace*{0.5cm}

Taking the limit in \eqref{limici} as $n\to \infty$, we obtain

\begin{align*}
G(T_1^{n(\xi)}\xi,\xi,\xi) & \leq \lambda \max\left\lbrace 0,G(\xi,T_1^{n(\xi)}\xi,T_1^{n(\xi)}\xi),0,0,\frac{1}{4}G(T_1^{n(\xi)}\xi,\xi,\xi)\right\rbrace \\
& \leq \lambda G(T_1^{n(\xi)}\xi,T_1^{n(\xi)}\xi,\xi).
\end{align*}
If we assume, by way of contradiction that $\xi \neq T_1^{n(\xi)}\xi$, we get that 

\[
G(T_1^{n(\xi)}\xi,\xi,\xi) \leq \lambda \ G(T_1^{n(\xi)}\xi,T_1^{n(\xi)}\xi,\xi) \leq \lambda \ G(T_1^{n(\xi)}\xi,\xi,\xi).
\]
--a contradiction, hence $$\xi = T_1^{n(\xi)}\xi.$$

Similarly, one shows that 
\[
T_2^{m(\xi)}\xi = \xi = T_3^{k(\xi)}\xi.
\]

Moreover, if $\eta$ is a point such that 

\[
\eta= T_1^{n(\eta)}\eta =T_2^{m(\eta)}\eta =T_3^{k(\eta)}\eta,
\]
then from \eqref{eqfin}, we can write 
\[
G(\xi,\eta,\eta) = G(T_1^{n(\xi)}\xi),T_2^{m(\eta)}\eta,T_3^{k(\eta)}\eta) \leq \lambda \max \left\lbrace G(\xi,\eta,\eta), \frac{3}{4}G(\xi,\eta,\eta)  \right\rbrace,
\]
which implies $\eta=\xi,$ i.e. $\xi$ is the unique point satisfying 
\[
a= T_1^{n(a)}a =T_2^{m(a)}a =T_3^{k(a)}a.
\]
Furthermore the condition $\xi = T_1^{n(\xi)}\xi$ implies $$T_1(\xi) = T_1(T_1^{n(\xi)}\xi)=T_1^{n(\xi)}(T_1\xi)  .$$ From the uniqueness of $\xi$, we derive that $T_1\xi=\xi$. Similarly, $$T_2\xi=\xi=T_3\xi.$$

This completes the proof.
\end{proof}

We then have the following two corollaries, for which the proofs are quite straightforward.

\begin{corollary}\label{corfin1}
Let $T$ be a self-mapping on a complete $G$-metric space $(X,G)$ such that there exists a positive real number $\lambda, 0<\lambda<1$ such that, for each $x,y,z\in X$ there exist positive integers $n(x),n(y), n(z)$ such that 

\begin{align}\label{eqfincor1}
G(T^{n(x)}x,T^{n(y)}y,T^{n(z)}z)  \leq &\  \lambda \max\{ A,
 B,C,D,E\},
\end{align}
where 
\begin{align*}
A &= G(x,y,z), \\
B &= G(x,T^{n(x)}x,T^{n(x)}x), \ C = G(y,T^{n(y)}y,T^{n(y)}y),\ D= G(z,T^{n(z)}z,T^{n(z)}z), \\
E &= \frac{1}{4}[G(T^{n(x)}x, y,z)+G(x,T^{n(y)}y,z) + G(x,y,T^{n(z)}z)].
\end{align*}

Then $T$ has a unique fixed point.
\end{corollary}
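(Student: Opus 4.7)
The plan is to reduce this corollary to Theorem \ref{thmfin} by the specialization $T_1 = T_2 = T_3 = T$. First I would observe that under this specialization, the integers $n(x), m(y), k(z)$ in hypothesis \eqref{eqfin} become $n(x), n(y), n(z)$ attached to a single mapping, and every term defining $A, B, C, D, E$ in \eqref{eqfin} collapses to the corresponding term in \eqref{eqfincor1}. In particular, $T_1^{n(x)}x = T^{n(x)}x$, $T_2^{m(y)}y = T^{n(y)}y$, and $T_3^{k(z)}z = T^{n(z)}z$, so the hypothesis of Theorem \ref{thmfin} is satisfied verbatim with the same constant $\lambda$.

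Next I would invoke Theorem \ref{thmfin} to conclude the existence of a unique common fixed point $\xi \in X$ of $T_1, T_2, T_3$. With the identification $T_1 = T_2 = T_3 = T$, this common fixed point is simply a fixed point of $T$, i.e.\ $T\xi = \xi$.

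For uniqueness within the corollary, I would note that any fixed point $\eta$ of $T$ automatically satisfies $T^{n(\eta)}\eta = \eta$, so $\eta$ is a common fixed point of $T_1 = T_2 = T_3 = T$ in the sense of Theorem \ref{thmfin}; the uniqueness part of that theorem then forces $\eta = \xi$.

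There is essentially no obstacle here: the corollary is literally the diagonal case of Theorem \ref{thmfin}, and the proof amounts to checking that the contractive inequality \eqref{eqfincor1} coincides term-by-term with \eqref{eqfin} when the three mappings are identified. The only small point to verify carefully is that the uniqueness conclusion of Theorem \ref{thmfin} (stated for common fixed points of the triple) does transfer to uniqueness of the fixed point of the single map $T$, which is immediate from the observation above.
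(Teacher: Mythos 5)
Your proposal is correct and is essentially identical to the paper's own proof, which simply sets $T_1=T_2=T_3=T$, $m(y)=n(y)$, $k(z)=n(z)$ in Theorem \ref{thmfin}. Your extra remark that uniqueness of the common fixed point transfers to uniqueness of the fixed point of $T$ (since any fixed point $\eta$ of $T$ satisfies $T^{n(\eta)}\eta=\eta$) is a welcome detail the paper leaves implicit.
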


\begin{proof}
In Theorem \ref{thmfin}, set $T_1=T_2=T_3,\ m(y)=n(y)$ and $k(z)=n(z).$

\end{proof}

\begin{corollary}\label{corfin2}
Let $\{T_n\}$ be a sequence of self-mappings on a complete $G$-metric space $(X,G)$ such that there exists a positive real number $\lambda, 0<\lambda<1$ such that, for each $x,y,z\in X$ there exists positive integers $n(x),n(y), n(z)$ such that, for each $i,j,k=1,2,\cdots,$ 

\begin{align}\label{eqfincor2}
G(T_i^{n(x)}x,T_j^{n(y)}y,T_k^{n(z)}z)  \leq &\  \lambda \max\{ A,
 B,C,D,E\},
\end{align}
where 
\begin{align*}
A &= G(x,y,z), \\
B &= G(x,T_i^{n(x)}x,T_i^{n(x)}x), \ C = G(y,T_j^{n(y)}y,T_j^{n(y)}y),\ D= G(z,T_k^{n(z)}z,T_k^{n(z)}z), \\
E &= \frac{1}{4}[G(T_i^{n(x)}x, y,z)+G(x,T_j^{n(y)}y,z) + G(x,y,T_k^{n(z)}z)].
\end{align*}

Then there exists a unique common fixed point for the family $\{T_n\}$.
\end{corollary}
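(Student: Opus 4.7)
The plan is to reduce the statement to the previously established Theorem \ref{thmfin} and Corollary \ref{corfin1} by well-chosen specializations of the indices $i,j,k$, followed by two short uniqueness arguments.

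First I would apply Corollary \ref{corfin1} to each individual $T_n$: setting $i=j=k=n$ in \eqref{eqfincor2} reduces the hypothesis exactly to \eqref{eqfincor1} for the single map $T_n$ (with the same index choice $n(x),n(y),n(z)$). This yields, for every $n$, a unique fixed point $\xi_n$ of $T_n$.

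Next I would show that all the $\xi_n$ coincide. Given any two indices $i,j$, apply Theorem \ref{thmfin} to the triplet $(T_i,T_j,T_j)$ with $m(y):=n(y)$ and $k(z):=n(z)$; the contractive condition \eqref{eqfin} for this triplet follows directly from \eqref{eqfincor2} specialized at these indices. Theorem \ref{thmfin} then produces a unique common fixed point $\eta$ of $T_i,T_j,T_j$. In particular $\eta$ is a fixed point of $T_j$, so by the uniqueness obtained in the previous step $\eta=\xi_j$; and since $T_i\xi_j=T_i\eta=\eta=\xi_j$, the point $\xi_j$ is also a fixed point of $T_i$, hence $\xi_j=\xi_i$ by the same uniqueness. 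Writing $\xi$ for this common value, one has $T_n\xi=\xi$ for every $n$, so $\xi$ is a common fixed point of the family; any other common fixed point $\xi'$ is in particular a fixed point of $T_1$, so $\xi'=\xi_1=\xi$ by the uniqueness in the first step.

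I do not foresee any genuine obstacle, since the whole argument is a bookkeeping reduction to results already in hand. The only point requiring care is to verify at each step that \eqref{eqfincor2}, specialized at the chosen indices, delivers exactly the contractive inequality demanded by Corollary \ref{corfin1} or Theorem \ref{thmfin}; this is precisely the sort of short reduction the author describes as ``quite straightforward''.
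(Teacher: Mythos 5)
Your proposal is correct and follows exactly the route the paper intends: the author omits the proof of Corollary \ref{corfin2} as ``quite straightforward'', and the intended argument is precisely the reduction to Theorem \ref{thmfin} (as done explicitly for Corollary \ref{corfin1}), which you carry out by specializing the indices $i,j,k$ in \eqref{eqfincor2}. Your extra care in gluing the individual fixed points $\xi_n$ together via the triplets $(T_i,T_j,T_j)$ and the uniqueness from Corollary \ref{corfin1} fills in the details the paper leaves unstated, and it is sound.
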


We here state the last theorem of this paper.

\begin{theorem}\label{lafin}

Let $\{T_n\}$ be a sequence of continuous  self-mappings of a complete $G$-metric space $(X,G)$ such that there exists a positive real number $\lambda, 0<\lambda<1$ such that, for each $x,y,z\in X$ there exists a positive integer $n(x)$ such that

 \begin{align}\label{lafineq}
G(T_i^{n(x)}x,T_i^{n(x)}y,T_i^{n(x)}z)  \leq &\  \lambda \max\{ A,
 B,C,D,E\},
\end{align}
where 
\begin{align*}
A &= G(x,y,z), \\
B &= G(x,T_i^{n(x)}x,T_i^{n(x)}x), \ C = G(y,T_i^{n(x)}y,T_i^{n(x)}y),\ D= G(z,T_i^{n(x)}z,T_i^{n(x)}z), \\
E &= \frac{1}{4}[G(T_i^{n(x)}x, y,z)+G(x,T_i^{n(x)}y,z) + G(x,y,T_i^{n(x)}z)].
\end{align*}
Suppose $\{T_i\}$ converges pointwise to a continuous function $T$. Then $T$ has a unique fixed point $x^*$. Moreover if we call $x^*_i$ the unique fixed points of the $T_i$'s, then the sequence $\{x^*_i\}$ $G$-converges to $x^*$.

\end{theorem}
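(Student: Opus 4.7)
The plan follows the three-stage structure of Theorem \ref{thmfin}: first produce a unique fixed point $x_i^*$ for each $T_i$, second produce a unique fixed point $x^*$ for $T$, and third show $G$-convergence $x_i^* \to x^*$.

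For stage one I would apply Theorem \ref{thmfin} to the triple $(T_i, T_i, T_i)$, letting the single integer $n(x)$ of \eqref{lafineq} play the role of $n(x), m(y), k(z)$. Condition \eqref{lafineq} is exactly what the hypothesis of Theorem \ref{thmfin} demands in this degenerate case, so $T_i$ admits a unique fixed point $x_i^*$. Equivalently, the Picard-style iteration $x_{k+1} = T_i^{n(x_k)}(x_k)$ telescopes, exactly as in the boxed estimates of Theorem \ref{thmfin}, to a $G$-Cauchy sequence whose limit is $x_i^*$.

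Stage two is where the limiting hypothesis comes in, and I expect it to be the main obstacle. The plan is to pass \eqref{lafineq} to the limit $i \to \infty$ to obtain the analogous contractive inequality for $T$, and then invoke Corollary \ref{corfin1} applied to $T$ to produce $x^*$. The delicate point is that taking this limit requires $T_i^k(y) \to T^k(y)$ for each fixed $y \in X$ and each $k \geq 1$, which is \emph{not} a formal consequence of the pointwise hypothesis $T_i \to T$ alone. I would prove it by induction on $k$: the base case $k=1$ is the hypothesis, and in the inductive step one invokes the (G5)-style decomposition
\[
G(T_i(T_i^{k-1}y), T(T^{k-1}y), T(T^{k-1}y)) \leq G(T_i(T_i^{k-1}y), T_i(T^{k-1}y), T_i(T^{k-1}y)) + G(T_i(T^{k-1}y), T(T^{k-1}y), T(T^{k-1}y)).
\]
The second summand tends to zero by pointwise convergence at the fixed argument $T^{k-1}y$; the first summand is handled by combining continuity of each $T_i$ with the uniform-in-$i$ contractive grip imposed by \eqref{lafineq} on the compact set $\{T_i^{k-1}y : i \in \mathbb{N}\} \cup \{T^{k-1}y\}$, compactness following from the inductive hypothesis. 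With this pointwise convergence of iterates in hand, joint continuity of $G$ lets me pass \eqref{lafineq} to the limit to conclude that $T$ satisfies the same contractive condition, whence $x^*$ exists by Corollary \ref{corfin1}.

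For stage three I would apply \eqref{lafineq} for $T_i$ at the triple $(x^*, x_i^*, x_i^*)$. Using $T_i^{n(x^*)}(x_i^*) = x_i^*$ (immediate from $T_i x_i^* = x_i^*$) and writing $\alpha_i = G(T_i^{n(x^*)}x^*, x_i^*, x_i^*)$, $\beta_i = G(x^*, x_i^*, x_i^*)$, and $B_i = G(x^*, T_i^{n(x^*)}x^*, T_i^{n(x^*)}x^*)$, the five right-hand terms collapse to $\beta_i$, $B_i$, $0$, $0$, and $\tfrac{1}{4}[\alpha_i + 2\beta_i]$. Combining the resulting inequality $\alpha_i \leq \lambda \max\{\beta_i, B_i, \tfrac{1}{4}[\alpha_i + 2\beta_i]\}$ with the (G5) estimate $\beta_i \leq B_i + \alpha_i$, a short case analysis on which term attains the maximum shows $\beta_i \leq C(\lambda)\, B_i$ for some constant $C(\lambda) > 0$. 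Since $B_i \to G(x^*, T^{n(x^*)}x^*, T^{n(x^*)}x^*) = 0$ by stage two, $\beta_i \to 0$, which is the claimed $G$-convergence $x_i^* \to x^*$.
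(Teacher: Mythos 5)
Your architecture coincides with the paper's: pass \eqref{lafineq} to the limit in $i$ to show that $T$ satisfies the same contractive condition, invoke Corollary \ref{corfin1} to produce $x^*$, and then evaluate \eqref{lafineq} for $T_i$ at the triple $(x^*,x_i^*,x_i^*)$ (using $T_i^{n(x^*)}x_i^*=x_i^*$) to force $G(x^*,x_i^*,x_i^*)\to 0$. Your stage three is in fact more careful than the paper's, which tacitly assumes $\lim_i G(x^*,x_i^*,x_i^*)$ exists and silently uses $T_i^{n(x^*)}x^*\to x^*$; your case analysis with $\beta_i\leq B_i+\alpha_i$ is sound. You also correctly flag the point the paper glosses over entirely: pointwise convergence $T_i\to T$ does not by itself give $T_i^k y\to T^k y$, and both stage two and the input $B_i\to 0$ of stage three need exactly this.

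However, your repair of that gap does not go through. In your (G5) decomposition the first summand $G(T_i(T_i^{k-1}y),T_i(T^{k-1}y),T_i(T^{k-1}y))$ requires a modulus of continuity for $T_i$ at $T^{k-1}y$ that is \emph{uniform in $i$}. Continuity of each individual $T_i$ gives nothing uniform, and pointwise convergence of continuous maps to a continuous limit does not upgrade to equicontinuity, even on the compact set $\{T_i^{k-1}y\}\cup\{T^{k-1}y\}$. Nor does \eqref{lafineq} supply the claimed ``uniform contractive grip'': it constrains only the $n(x)$-th power of $T_i$, not $T_i$ itself, and even for that power the bound is a maximum that includes the displacement terms $B,C,D$, which need not be small when the three arguments are merely close to one another. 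So the first summand is not shown to tend to zero, stage two remains unproved, and with it the conclusion $T_i^{n(x^*)}x^*\to x^*$ on which stage three relies. (A secondary wrinkle: \eqref{lafineq} uses a single exponent $n(x)$, attached to the first argument, in all three slots, so it is not literally an instance of the hypotheses of Theorem \ref{thmfin} or Corollary \ref{corfin1}, where the exponents are attached to $y$ and $z$ separately; your stage one needs either a remark on this reindexing or a rerun of the Cauchy-sequence argument for this variant.)
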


\begin{proof}
In \eqref{lafineq}, take the  limit as $i\to \infty$ and use the continuity of $T$, $T_i$ and $G$ to obtain the result that $T$ satisfies \eqref{lafineq}. From Corollary \ref{corfin1}, $T$ has a unique point $x^*$.


From \eqref{lafineq}, we have

\begin{equation}\label{eq1}
G(T_i^{n(x^*)}x^*,x^*_i,x^*_i)=G(T_i^{n(x^*)}x^*,T_i^{n(x^*)}x^*_i,T_i^{n(x^*)}x^*_i) \leq \lambda \max\{A ,B,C\}
\end{equation}

where 

\[
A= G(x^*,x^*_i,x^*_i), \ B=G(x^*,T_i^{n(x^*)}x^*,T_i^{n(x^*)}x^*) 
\]
\[C= \frac{1}{4} G(T_i^{n(x^*)}x^*,x^*_i,x^*_i)+ \frac{1}{2}G(x^*,x^*_i,x^*_i).
\]
Taking the limit as $i\to \infty$ in \eqref{eq1}, we obtain
\[
\lim_{i\to \infty}G(x^*,x^*_i,x^*_i) \leq \lambda\ \lim_{i\to \infty}G(x^*,x^*_i,x^*_i),
\]
which is true only if $$\lim_{i\to \infty}G(x^*,x^*_i,x^*_i)=0.$$


%
%



\end{proof}

\section{Concluding remark}

Some comments about Bryant's result can be read in \cite{sh}, where the author motivated some interesting questions regarding this modified Banach principle. More precisely, he proved that for a single valued mapping $T$ in a complete metric space $(X,d)$, if $T^n$, for some $n>1$, is a contraction, then $T$ itself\footnote{Note that the map $T$ need not be a contraction under the metric $d$.} is a contraction under another related metric $d'$. The intuition behind Theorem \ref{BCPext} is that, even though the mapping $T$ is not a contraction, locally, there is a power of $T$ which is a contraction, and that is enough for $T$ to admit a unique fixed point.

Furthermore, in the results presented in the last section of this paper, one can obviously weaken the contractive condition by restricting it to a subset $B$ of $X$ such that $B$ is invariant under the mappings involved and contains the closure of all iterates of some $x_0\in B.$

\vspace*{0.2cm}

Extending the BCP requires a structure of \textit{complete metric-like space
with contractive condition}  on the map. There is vast amount of
literature dealing with extensions/generalizations of thE BCP. An attempt was made in this manuscript to present some extensions of the BCP in which
the conclusion is obtained under mild modified conditions and which play important
role in the development of $G$-metric fixed point theory.

\section*{Conflict of interests }

The author declares that there is no conflict
of interests regarding the publication of this article.

\vspace*{0.2cm}

\section*{Acknowledgments.}

 This work was carried out with financial support from the government of Canada’s International
Development Research Centre (IDRC), and within the framework of the AIMS Research for Africa
Project.

\bibliographystyle{amsplain}

\end{document}